\documentclass[centertags,12pt]{amsart}
\usepackage{latexsym}
\usepackage{amsthm}
\usepackage{amssymb}
\usepackage{color}
\usepackage[utf8]{inputenc}
\usepackage{mathrsfs}
\usepackage[english]{babel}
\usepackage{setspace}
\usepackage{ textcomp }
\usepackage{graphicx}
\usepackage{float}
\usepackage{tikz}
\usepackage{soul}
\usepackage{amsmath}
\graphicspath{ {images/} }
\usepackage{hyperref}

\textwidth = 16.00cm
\textheight = 20.00cm
\oddsidemargin = 0.12in
\evensidemargin = 0.12in
\setlength{\parindent}{0pt}
\setlength{\parskip}{5pt plus 2pt minus 1pt}

\numberwithin{equation}{section}
\makeatletter
\renewcommand{\subsection}{\@startsection
	{subsection}{2}{0mm}{\baselineskip}{-0.25cm}
	{\normalfont\normalsize\bf}}
\makeatother


\newtheorem{theorem}{Theorem}[section]
\newtheorem{remark}[theorem]{Remark}
\newtheorem{proposition}[theorem]{Proposition}
\newtheorem{lemma}[theorem]{Lemma}
\newtheorem{corollary}[theorem]{Corollary}

{\theoremstyle{definition}

	}
\theoremstyle{remark}

\newcommand{\fqs}{{\mathbb F_{q^2}}}

\definecolor{Amaranto}{rgb}{0.9, 0.17, 0.31}
\definecolor{Borgogna}{rgb}{0.5, 0.0, 0.13}

\sloppy

\begin{document}
		\author[V. Pallozzi Lavorante]{Vincenzo Pallozzi Lavorante}
	\address{Dipartimento di Matematica Pura e Applicata, Universit\'a degli Studi di Modena e Reggio Emilia}
	\email{vincenzo.pallozzilavorante@unimore.it}
	\title{New families of permutation trinomials constructed by permutations of  $\mu_{q+1}$}
	
	\begin{abstract}
		Permutation polynomials are of particular significance in several areas of applied mathematics, such as Coding theory and Cryptography. Many recent constructions are based on the Akbary-Ghioca-Wang (AGW) criterion. Along this line of research, we provide new classes of permutation trinomials in $\mathbb{F}_{q^2}$ of the form $f(x)=x^r h(x^{q-1})$, by studying permutations of the set of $(q+1)$-th roots of unity, which look like monomials on the sets of suitable partitions.
	\end{abstract}
	
	\maketitle
	
	\begin{small}
		
		{\bf Keywords:} Permutation Polynomials, AGW construction, Primitive d-th roots
		
		{\bf 2000 MSC:} { Primary: 11T06. Secondary: 12E10}.
		
	\end{small}

\maketitle
	\section{Introduction}
\label{intro}
Let $ \fqs $  be the finite field with $ q^2 $ elements, with $q=p^h$ a prime power. A polynomial $ f \in \fqs[x] $ is called a permutation polynomial of $ \fqs $ if its associated polynomial mapping $ f \colon c \mapsto f(c) $ from $ \fqs $ to itself is a bijection. \\
Permutation polynomials over finite fields have wide applications in coding theory, cryptography, and combinatorial design theory \cite{13}, and we refer to \cite{1,18} for more details of the recent advances and contributions to the area. For results in other areas we refer to \cite{8,7,14}.\\ 
In general, permutation polynomials without particular assumptions can be easily constructed. For example,  Lagrange interpolation provides an essential method to obtain permutation polynomials over a finite field.
However one can ask for additional properties to be satisfied by the polynomials, making this problem much more difficult. 

\noindent Recently, many authors focused on permutation binomials and trinomials. See  \cite{6,15,16,11,10}. 

\noindent In this paper we limit ourselves to the study of permutation trinomials over $\fqs$, with $q$ odd.
Every polynomial $p(x)$ over $\fqs$ has the form $a x^m fx^{(q^2-1)/n})+b$, whenever $f(0)=b$, for $m,n$ positive integer such that $n \mid q^2-1$. The smallest integer $n$ such that $p(x)=a x^m f(x^{(q^2-1)/n})+b$ is the \textit{index} of $ p(x) $.

\noindent In \cite{20} the authors provided an interesting way to obtain permutation trinomials of index $q+1$ over $ \fqs $ by studying permutations of $\mu_{q+1}$, the set of $q+1$-th roots of unity. They split $\mu_{q+1}$ into two or three disjoint subsets and studied permutations on these sets separately, by using monomials.
Following the same approach, we extend their method for a generic divisor $d$ of $q+1$ and study permutations on the sets $\mu_{\frac{q+1}{d}}$, in order to get bijections on $\mu_{q+1}$.
Under precise conditions, this gives new families of permutation trinomials over $\fqs$ of the type $f(x)=x^r h(x^{q-1})$.
Indeed, we construct permutation polynomials of type
\[x^r (c+x^{n_1(q-1)} \pm x^{n_2(q-1) } ),\]
where $n_1$ and $n_2$ are some properly chosen positive integers.

\noindent The paper is organized as follows.
In Section \ref{sec:2} we present some previous results that provides very useful methods for constructing permutations polynomial. In Subsection \ref{ss:pari} we deal with the case $d$ even and we prove the following Theorem.

\begin{theorem}\label{th:n1}
	Let $d$ be an even integer. Assume that $q \equiv -1 \pmod d$, $q \equiv 1 \pmod 4$ and $\gcd(\frac{q+1}{d},d)=1$. Let $c \in \fqs$ be such that $(\frac{c}{2})^\frac{q+1}{2} = 1$ and $k$ be an odd integer. Furthermore, let $v=\frac{q+1}{d}$ and $v_1=(d-1)v$. Let
	\[h(x)=c+x^{v_1+k}+x^{qv+k},\]
	then $f(x)=x^rh(x^{q-1})$ is a permutation polynomial of $\fqs$ if and only if $\gcd(r-k,\frac{q+1}{d})=1$ , $\gcd(r,q-1)=1$ and $\gcd(s+r-k,d)=1$, with $  s \equiv \frac{q+1}{d} \pmod d$ and $s\leq d$. 
\end{theorem}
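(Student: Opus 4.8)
The plan is to reduce the permutation property of $f$ on $\fqs$ to a permutation property on $\mu_{q+1}$ by means of the standard criterion recalled in Section~\ref{sec:2}: the map $f(x)=x^r h(x^{q-1})$ permutes $\fqs$ if and only if $\gcd(r,q-1)=1$ and the auxiliary map $g(x)=x^r h(x)^{q-1}$ permutes the cyclic group $\mu_{q+1}$. The condition $\gcd(r,q-1)=1$ is already one of the three listed, so the entire argument rests on analysing $g$ on $\mu_{q+1}$. The guiding observation is that, although $h$ is a genuine trinomial, it collapses to a binomial once restricted to the roots of unity.

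First I would simplify $h$ on $\mu_{q+1}$. Since $x^{q+1}=1$ there, and $v_1=(d-1)v=(q+1)-v$ while $qv\equiv -v\pmod{q+1}$, both monomials $x^{v_1+k}$ and $x^{qv+k}$ reduce to the single term $x^{k-v}$; hence $h(x)=c+2x^{k-v}=2\bigl(c'+x^{k-v}\bigr)$ on $\mu_{q+1}$, writing $c'=c/2$. The hypothesis $(c/2)^{(q+1)/2}=1$ says precisely that $c'\in\mu_{(q+1)/2}\subseteq\mu_{q+1}$. Using $w^q=w^{-1}$ for $w\in\mu_{q+1}$, one computes $(c'+u)^{q-1}=(c'u)^{-1}$ for any $u\in\mu_{q+1}$ with $c'+u\neq 0$, and since $2\in\bfq$ we have $2^{q-1}=1$. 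Substituting $u=x^{k-v}$ then gives the clean form $g(x)=(c')^{-1}x^{\,r-k+v}$, a constant multiple of a monomial on $\mu_{q+1}$.

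The step that requires the remaining congruence hypotheses, and the one I expect to be the crux, is verifying that $h$ never vanishes on $\mu_{q+1}$, so that this formula for $g$ holds at every point. Here I would use that $d$ even together with $\gcd(\tfrac{q+1}{d},d)=1$ forces $v$ odd, while $k$ is odd, so $k-v$ is even; consequently $(x^{k-v})^{(q+1)/2}=(x^{(q+1)/2})^{k-v}=1$ for every $x\in\mu_{q+1}$, i.e. $x^{k-v}\in\mu_{(q+1)/2}$. On the other hand $q\equiv 1\pmod 4$ makes $(q+1)/2$ odd, whence $(-c')^{(q+1)/2}=-(c')^{(q+1)/2}=-1$, so $-c'\notin\mu_{(q+1)/2}$ and thus $x^{k-v}\neq -c'$ for all $x$. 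Therefore $h(x)\neq 0$ on $\mu_{q+1}$ and $g(x)=(c')^{-1}x^{\,r-k+v}$ identically.

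It then remains only to record when this monomial permutes $\mu_{q+1}$. Multiplication by the constant $(c')^{-1}\in\mu_{q+1}$ is a bijection, so $g$ permutes $\mu_{q+1}$, of order $q+1=dv$, if and only if $\gcd(r-k+v,\,q+1)=1$. Since $\gcd(d,v)=1$, the Chinese Remainder Theorem splits this into $\gcd(r-k+v,v)=1$ and $\gcd(r-k+v,d)=1$; the former is $\gcd(r-k,\tfrac{q+1}{d})=1$, and using $v\equiv s\pmod d$ the latter is $\gcd(s+r-k,d)=1$. Combining with $\gcd(r,q-1)=1$ from the criterion yields exactly the three stated conditions, and since every implication used is an equivalence, the ``if and only if'' follows.
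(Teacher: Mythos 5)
Your proof is correct, and while its first half coincides with the paper's, its endgame is genuinely different. Like the paper, you reduce to $\mu_{q+1}$ via the criterion of Theorem \ref{cr:1}, collapse the trinomial to $c+2x^{k-v}$ on $\mu_{q+1}$ (this is the paper's Lemma \ref{lm:1} in the equivalent form $x^{v_1+k}=x^{qv+k}=x^{k-v}$), and rule out zeros of $h$ on $\mu_{q+1}$ by the same parity argument ($k-v$ even because $d$ even and $\gcd(\frac{q+1}{d},d)=1$ force $v$ odd; $(q+1)/2$ odd because $q\equiv 1 \pmod 4$). The divergence is in how the permutation condition is extracted. The paper writes $g$ coset by coset on the partition $\mu_{q+1}=\bigcup_{i}\omega^{i}\mu_{\frac{q+1}{d}}$, obtaining $g(\omega^i y)=\frac{2}{c}\,\omega^{i(s+r-k)}y^{r-k}$, and then invokes its coset-monomial criterion (Theorem \ref{th:omega}, built on Proposition \ref{1.1}); that partition is the paper's methodological centerpiece, and $\gcd(\frac{q+1}{d},d)=1$ is also what makes the cosets disjoint there. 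You instead notice that these piecewise expressions glue into one global monomial: since $y^v=1$ and $\omega^{iv}=\omega^{is}$, one has $\frac{2}{c}\,\omega^{i(s+r-k)}y^{r-k}=\frac{2}{c}\,x^{r-k+v}$, so $g$ is a unit multiple of a monomial on the whole cyclic group $\mu_{q+1}$ and permutes it iff $\gcd(r-k+v,q+1)=1$; writing $q+1=dv$ this splits into $\gcd(r-k,v)=1$ and $\gcd(s+r-k,d)=1$ (multiplicativity of coprimality suffices here --- you do not even need the CRT or $\gcd(d,v)=1$ for the splitting, that hypothesis enters your argument only to make $v$ odd). Your computation $(c'+u)^{q-1}=(c'u)^{-1}$ for $c',u\in\mu_{q+1}$ is a correct and clean way to package the paper's fraction manipulation. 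What your route buys: it is more elementary, bypasses the coset apparatus entirely, and makes transparent the remark of M.~Zieve quoted at the end of Section \ref{intro} --- on $\mu_{q+1}$ the map is just a monomial, so the permutation criterion is the classical one for a cyclic group. What the paper's apparatus buys: it generalizes to situations where $g$ really is a different monomial with different constants on different cosets (as in Theorems \ref{th:n5} and \ref{th:n6}), where your global-monomial shortcut is unavailable.
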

Similarly, the case $d$ odd is treated in Subsection \ref{ss:pari}.
\begin{theorem}\label{th:n2}
	Let $d$ be an odd integer. Assume that $q \equiv -1 \pmod d$, $q \equiv 1 \pmod 4$ and $ \gcd\big(\frac{q+1}{d},d\big) =1 $. Let $c \in \fqs$ such that $(\frac{c}{2})^\frac{q+1}{2} = 1$ and $k$ an odd integer. Furthermore let $u = \frac{q+1}{d}+1$ and $u_1=(d-1)\frac{q+1}{d}+1$. Let
	\[h(x)=c+x^{u_1+k}+x^{qu+2+k},\]
	then $f(x)=x^rh(x^{q-1})$ is a permutation polynomial of $\fqs$ if and only if $\gcd(r-k-1,\frac{q+1}{d})=1$, $\gcd(r,q-1) = 1$ and $\gcd(s+r-k-1, d)=1$,
	with $  s \equiv \frac{q+1}{d} \pmod d  $, $s \leq d$.
\end{theorem}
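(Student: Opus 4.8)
The plan is to follow the AGW-type reduction recorded in Section~\ref{sec:2}: the map $f(x)=x^rh(x^{q-1})$ permutes $\fqs$ if and only if $\gcd(r,q-1)=1$ and the auxiliary map $g(x)=x^rh(x)^{q-1}$ permutes the unit circle $\mu_{q+1}$. This already isolates $\gcd(r,q-1)=1$ as one of the three required conditions and reduces everything else to analysing $g$ on $\mu_{q+1}$. Throughout I write $v=\frac{q+1}{d}$ (an integer since $q\equiv-1\pmod d$), so that $q+1=dv$, and I exploit $\gcd(v,d)=1$ to split the cyclic group as the internal direct product $\mu_{q+1}=\mu_v\times\mu_d$; every $x\in\mu_{q+1}$ is then written uniquely as $x=yz$ with $y\in\mu_v$, $z\in\mu_d$.

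The first key step is to show that $h$ collapses to a single monomial (plus the constant) on $\mu_{q+1}$. Using $x^{q+1}=1$ and $q\equiv-1\pmod{q+1}$, a direct computation reduces the two non-constant exponents to the same value: from $x^{(d-1)v}=z^{-v}$ one gets $x^{u_1+k}=z^{-v}x^{1+k}$, and since $q+2\equiv1\pmod{q+1}$ while $x^{qv}=x^{-v}=z^{-v}$, one also gets $x^{qu+2+k}=z^{-v}x^{1+k}$. Hence $h(x)=c+2z^{-v}x^{1+k}$ on $\mu_{q+1}$. The condition $(\tfrac{c}{2})^{(q+1)/2}=1$, together with $2\in\bfq$ (so $2^{q+1}=4$), yields $c^{q+1}=4$; this is exactly what is needed to factor $h(x)^q=z^{v}x^{-(1+k)}\cdot\tfrac{c^q}{2}\,h(x)$, so that $h(x)^{q-1}=\tfrac{c^q}{2}z^{v}x^{-(1+k)}$ is a monomial and
\[
g(yz)=x^r h(x)^{q-1}=\tfrac{c^q}{2}\,y^{\,r-k-1}\,z^{\,v+r-k-1}.
\]
Thus $g$ acts as $y\mapsto y^{r-k-1}$ on the $\mu_v$-part and as $z\mapsto z^{v+r-k-1}$ on the $\mu_d$-part, each up to the fixed constant $\tfrac{c^q}{2}\in\mu_{q+1}$.

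Before reading off bijectivity I must check that $g$ is everywhere defined, i.e.\ that $h$ has no zero on $\mu_{q+1}$; this is the step that consumes the remaining hypotheses, and it is the part I expect to require the most care. Writing $\tfrac{c^q}{2}=\zeta^{e_0}$ for a generator $\zeta$ of $\mu_{q+1}$, the relation $c^{q+1}=4$ gives $\tfrac{c}{2}=\zeta^{-e_0}$, and $(\tfrac{c}{2})^{(q+1)/2}=1$ forces $e_0$ to be even. A zero of $h$ would require $y^{1+k}z^{1+k-v}=-\tfrac{c}{2}=\zeta^{(q+1)/2-e_0}$. The left-hand side is a square in $\mu_{q+1}$, because $1+k$ is even ($k$ odd) and $v$ is even (since $d$ odd and $q\equiv1\pmod4$ force $v=\frac{q+1}{d}$ even), whereas the right-hand exponent $\frac{q+1}{2}-e_0$ is odd, as $q\equiv1\pmod4$ makes $\frac{q+1}{2}$ odd while $e_0$ is even. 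A square cannot equal a non-square, so $h(x)\neq0$ for all $x\in\mu_{q+1}$ and $g$ is well defined.

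Finally, since $\mu_{q+1}=\mu_v\times\mu_d$ and $g$ respects this product up to the constant $\tfrac{c^q}{2}$, the map $g$ permutes $\mu_{q+1}$ if and only if both coordinate maps are bijections: $y\mapsto y^{r-k-1}$ permutes $\mu_v$ iff $\gcd\!\big(r-k-1,\tfrac{q+1}{d}\big)=1$, and $z\mapsto z^{v+r-k-1}$ permutes $\mu_d$ iff $\gcd(v+r-k-1,d)=1$, which equals $\gcd(s+r-k-1,d)=1$ because $s\equiv\frac{q+1}{d}\equiv v\pmod{d}$. Combining these two equivalences with $\gcd(r,q-1)=1$ from the reduction yields the stated three conditions in both directions. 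The only genuinely delicate points are the exponent collapse that turns the trinomial into the binomial $c+2z^{-v}x^{1+k}$ and the parity bookkeeping guaranteeing $h\neq0$; once these are in place, the direct-product decomposition makes the ``if and only if'' immediate.
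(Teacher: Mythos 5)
Your proof is correct, and its skeleton coincides with the paper's: the reduction via Theorem \ref{cr:1}, the collapse of the trinomial on $\mu_{q+1}$ (this is the paper's Lemma \ref{lm:2}, in your notation $h(x)=c+2z^{-v}x^{1+k}$), and the non-vanishing of $h$ via parity (the paper raises the hypothetical identity $\frac{c}{2}=-x^{u_1+k}$ to the power $\frac{q+1}{2}$ and derives $1=-1$; your square-versus-nonsquare argument with a generator $\zeta$ is the same fact in different clothing). The one genuine divergence is the final step. The paper writes $x=\omega^i y$ with $\omega$ a primitive $d$-th root of unity, obtains $g(x)=\frac{2}{c}\,\omega^{i(s+r-k-1)}y^{r-k-1}$ on each coset $\omega^i\mu_{\frac{q+1}{d}}$, and then invokes its Theorem \ref{th:omega}, whose proof rests on Proposition \ref{1.1} quoted from \cite{20}, to convert this shape into the two gcd conditions. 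You instead use $\gcd\big(\frac{q+1}{d},d\big)=1$ to realize $\mu_{q+1}$ as the internal direct product $\mu_v\times\mu_d$, so that $g$ splits as a product of two translated monomial maps and bijectivity is equivalent to bijectivity of each factor. These are two descriptions of the same partition (the coset $\omega^i\mu_v$ is exactly the fibre $z=\omega^i$), so the difference is packaging rather than substance; but your packaging is self-contained elementary group theory that makes the ``if and only if'' immediate, whereas the paper's Theorem \ref{th:omega} is stated once as a reusable tool serving Theorems \ref{th:n1}, \ref{th:n2} and the later constructions. The only point you should make explicit is that the constant $\frac{c^q}{2}$ must itself be factored as $y_0z_0$ with $y_0\in\mu_v$, $z_0\in\mu_d$ before reading $g$ componentwise; this is immediate since $\big(\frac{c^q}{2}\big)^{q+1}=1$, which you effectively verified when deriving $c^{q+1}=4$.
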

At the end of Section \ref{sec:3}, we give two new constructions which hold for every $d$.
\begin{theorem}\label{th:n3}
	Let $q \equiv -1 \pmod d$, $c \in \mathbb{F}^*_{q^2}$. Furthermore write $v=\frac{q+1}{d}$ and $v_1=(d-1)v$. 
	Let \[h(x)=c+x^{v_1+k}-x^{qv+k},\]
	then $f(x)=x^rh(x^{q-1})$ is a permutation polynomial of $\fqs$ if and only if  $\gcd(r,q^2-1)=1$. 
\end{theorem}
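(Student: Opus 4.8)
The plan is to show that, despite appearances, the trinomial $f$ collapses to a monomial \emph{as a function} on $\fqs$, after which the equivalence becomes immediate. The engine is the standard passage from $\fqs^*$ to $\mu_{q+1}$: every nonzero $x$ satisfies $x^{q-1}\in\mu_{q+1}$, so the behaviour of $f(x)=x^rh(x^{q-1})$ on $\fqs^*$ is controlled entirely by the behaviour of $h$ on the $(q+1)$-th roots of unity. Accordingly, the first thing I would do is reduce the two non-constant exponents $v_1+k$ and $qv+k$ modulo $q+1$.

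Here the hypothesis $q\equiv -1\pmod d$ is used only to guarantee that $v=\frac{q+1}{d}$ is an integer, whence $dv=q+1$. From $dv=q+1\equiv 0\pmod{q+1}$ I get $v_1=(d-1)v=dv-v\equiv -v\pmod{q+1}$, and from $q\equiv -1\pmod{q+1}$ I get $qv\equiv -v\pmod{q+1}$. Hence $v_1+k\equiv qv+k\equiv k-v\pmod{q+1}$, so the two non-constant exponents of $h$ are congruent modulo $q+1$. I would record this as a short claim, since it is exactly what forces the cancellation: for every $\omega\in\mu_{q+1}$ one has $\omega^{v_1+k}=\omega^{qv+k}$, so the last two terms of $h$ cancel and $h(\omega)=c$ is constant on $\mu_{q+1}$.

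From this the conclusion follows directly. For $x\neq 0$ we have $x^{q-1}\in\mu_{q+1}$, hence $h(x^{q-1})=c$ and $f(x)=cx^r$; and $f(0)=0=c\cdot 0^r$, so $f(x)=cx^r$ as a function on all of $\fqs$. Since $c\in\fqs^*$, multiplication by $c$ is a bijection, so $f$ permutes $\fqs$ if and only if $x\mapsto x^r$ does, that is, if and only if $\gcd(r,q^2-1)=1$; this settles both directions of the equivalence simultaneously. Equivalently, one may feed $h\equiv c$ into the AGW-type criterion recalled in Section~\ref{sec:2}: the induced map on $\mu_{q+1}$ becomes $\omega\mapsto c^{q-1}\omega^r$ with $c^{q-1}\in\mu_{q+1}$, which permutes $\mu_{q+1}$ exactly when $\gcd(r,q+1)=1$, and together with $\gcd(r,q-1)=1$ this recovers $\gcd(r,q^2-1)=1$.

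I do not expect a serious obstacle in this statement: once one notices the collapse $h|_{\mu_{q+1}}\equiv c$, everything else is routine. The only step that genuinely needs care is the congruence computation of the two exponents modulo $q+1$, as that is what drives the cancellation; a secondary detail worth one line is checking the evaluation at $x=0$, to be certain that $f$ and $cx^r$ agree there as well.
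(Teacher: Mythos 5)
Your proof is correct and is essentially the paper's own argument: the congruence $v_1+k\equiv qv+k\pmod{q+1}$ is exactly the content of the paper's Lemma \ref{lm:1} (equation \eqref{v}), which makes $h\equiv c$ on $\mu_{q+1}$, and your ``equivalently'' remark (the induced map $x\mapsto c^{q-1}x^r$ on $\mu_{q+1}$, combined with Theorem \ref{cr:1}) is precisely how the paper concludes. Your primary phrasing --- collapsing $f$ to $cx^r$ as a function on all of $\fqs$ and invoking the monomial criterion --- is the same observation stated globally (it is also Zieve's remark quoted in the introduction), so there is no substantive difference.
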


\begin{theorem}\label{th:n4}
	Let $q \equiv -1 \pmod d$ and $c \in \fqs^*$. Furthermore let $u = \frac{q+1}{d}+1$ and $u_1=(d-1)\frac{q+1}{d}+1$. Let
	\[h(x)=c+x^{u_1+k}-x^{qu+2+k},\]
	then $f(x)=x^rh(x^{q-1})$, is a permutation polynomial of $\fqs$ if and only if  $\gcd(r,q^2-1) = 1$.
\end{theorem}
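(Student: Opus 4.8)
The plan is to apply the Akbary–Ghioca–Wang (AGW) criterion, which reduces permutation of $f(x)=x^r h(x^{q-1})$ on $\fqs$ to two conditions: first, that $\gcd(r,q-1)=1$, and second, that the induced map $\bar{g}\colon \zeta \mapsto \zeta^r h(\zeta)^{q-1}$ permutes the set $\mu_{q+1}$ of $(q+1)$-th roots of unity. (I expect the precise form of this reduction to be one of the ``previous results'' assembled in Section~\ref{sec:2}, so I will cite it rather than reprove it.) The whole difficulty is therefore transferred to analysing the map $\bar g$ on $\mu_{q+1}$, and the point of Theorem~\ref{th:n4} is that the minus sign in $h$ makes this map behave \emph{uniformly} on all of $\mu_{q+1}$, collapsing the delicate partition-dependent conditions of Theorems~\ref{th:n1} and~\ref{th:n2} into the single clean hypothesis $\gcd(r,q^2-1)=1$.

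First I would compute $h(\zeta)^{q-1}$ for $\zeta \in \mu_{q+1}$, i.e.\ $\zeta^{q+1}=1$, so that $\zeta^q=\zeta^{-1}$. With $u=\frac{q+1}{d}+1$ and $u_1=(d-1)\frac{q+1}{d}+1$, I would evaluate the three exponents $u_1+k$ and $qu+2+k$ modulo $q+1$ using $\zeta^{q+1}=1$. The expectation — and this is the engine of the theorem — is that after this reduction the exponent appearing in $\zeta^{qu+2+k}$ becomes the \emph{conjugate} (i.e.\ the $q$-th power) of the one in $\zeta^{u_1+k}$, so that the two non-constant terms of $h(\zeta)$ are complex-conjugate over $\bfq$. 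Writing $h(\zeta)=c+w-\bar w$ with $w=\zeta^{u_1+k}$ and $\bar w=w^q$, the combination $w-\bar w=w-w^q$ is purely ``imaginary'' in the sense that $(w-w^q)^q=w^q-w=-(w-w^q)$; hence $h(\zeta)^q = c^q - (w-w^q)^q$ relates to $h(\zeta)$ in a controlled way. The key subcomputation will be to show that $h(\zeta)^{q-1}$ simplifies to a single monomial power of $\zeta$, say $\zeta^{m}$ for an explicit $m$ independent of which part of the partition $\zeta$ lies in.

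Granting that, $\bar g(\zeta)=\zeta^{r+m}$ on all of $\mu_{q+1}$, and a monomial $\zeta \mapsto \zeta^{N}$ permutes the cyclic group $\mu_{q+1}$ precisely when $\gcd(N,q+1)=1$. Combining $\gcd(r,q-1)=1$ from AGW with $\gcd(r+m,q+1)=1$ from the $\mu_{q+1}$ analysis, I would then show these two coprimality conditions are together equivalent to the stated single condition $\gcd(r,q^2-1)=1$; for this the hypothesis $q\equiv -1\pmod d$ and the explicit value of $m$ should force $m\equiv 0$ (or otherwise make $r+m$ and $r$ share the same residue behaviour modulo the prime divisors of $q+1$), so that $\gcd(r,q-1)=\gcd(r+m,q+1)=1 \iff \gcd(r,(q-1)(q+1))=1$. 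I would also have to verify that $h(\zeta)\neq 0$ for every $\zeta\in\mu_{q+1}$, since $\bar g$ is only defined where $h$ is nonzero and the AGW setup needs $f$ to map into the correct fibres; the nonvanishing of $c+w-w^q$ should follow because $w-w^q$ and the constant $c$ live in ``orthogonal'' parts of $\fqs$, but I would keep it on the checklist.

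The main obstacle I anticipate is the bookkeeping in the first step: correctly reducing the exponents $u_1+k$ and $qu+2+k$ modulo $q+1$ and verifying that they are genuinely $q$-conjugate, which is exactly where the specific shape of $u,u_1$ (the $+1$ and $+2$ offsets) and the divisibility $d\mid q+1$ are used. If that conjugacy fails — for instance if a stray factor of $\zeta$ survives — then $h(\zeta)^{q-1}$ will not collapse to a clean monomial and the argument breaks; so the crux is an exact, not approximate, identity. Once the conjugate structure is confirmed and $m$ is pinned down, the rest is the standard monomial-permutation-of-$\mu_{q+1}$ plus a short $\gcd$ manipulation, and the ``if and only if'' comes for free because each reduction step (AGW and the monomial criterion) is itself an equivalence.
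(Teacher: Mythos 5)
Your scaffolding (the AGW reduction of Theorem \ref{cr:1}, then reducing exponents modulo $q+1$ on $\mu_{q+1}$) is exactly the paper's route, but the identity you bet on as ``the engine'' is the wrong one, and with that identity the argument would not go through. The two exponents are not $q$-conjugate on $\mu_{q+1}$: they are \emph{equal}. Indeed, modulo $q+1$ one has $u_1+k=(d-1)\tfrac{q+1}{d}+1+k\equiv -\tfrac{q+1}{d}+1+k$ and $qu+2+k\equiv -u+2+k=-\tfrac{q+1}{d}+1+k$, so $\zeta^{u_1+k}=\zeta^{qu+2+k}$ for every $\zeta\in\mu_{q+1}$ (this is the paper's Lemma \ref{lm:2}). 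The minus sign therefore makes the two non-constant terms cancel identically, giving $h(\zeta)=c$ on all of $\mu_{q+1}$; this is also why Zieve's remark in the paper says $f(x)\equiv cx^r \pmod{x^{q^2}-x}$. Then $g(\zeta)=c^{q-1}\zeta^r$ with $c^{q-1}\in\mu_{q+1}$ (a multiplicative constant, not a shift of the exponent, but multiplication by an element of $\mu_{q+1}$ is a bijection of $\mu_{q+1}$), so $g$ permutes $\mu_{q+1}$ if and only if $\gcd(r,q+1)=1$; combined with $\gcd(r,q-1)=1$ from Theorem \ref{cr:1} this is precisely $\gcd(r,q^2-1)=1$. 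Non-vanishing of $h$ on $\mu_{q+1}$, which you rightly kept on your checklist, is trivial here: $h(\zeta)=c\neq 0$.

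The gap matters because your conjugacy scenario, had it been the true one, would break the theorem rather than prove it. If $h(\zeta)=c+w-w^q$ with $w=\zeta^{u_1+k}$ and $w\neq w^q$, then $h(\zeta)^{q-1}=\frac{c^q-(w-w^q)}{c+(w-w^q)}$ is not a monomial in $\zeta$ for generic $c$, and since $w^q=w^{-1}$ on $\mu_{q+1}$, the equation $c+w-w^{-1}=0$ can have solutions $w\in\mu_{q+1}$ for suitable $c\in\fqs^*$, so $h$ could even vanish --- incompatible with a clean criterion valid for \emph{every} nonzero $c$, which is itself a strong hint that the terms must cancel outright. Your own hedge (``the crux is an exact, not approximate, identity'') correctly located the danger; the resolution is that the exact identity is equality of the two powers, not conjugacy. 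Once that two-line computation is done, the rest of your outline (monomial permutation criterion on the cyclic group $\mu_{q+1}$ plus the gcd combination) is correct and coincides with the paper, which derives Theorem \ref{th:n4} from Lemma \ref{lm:2} exactly as Theorem \ref{th:n3} is derived from Lemma \ref{lm:1}.
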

Section \ref{sec:4} is dedicated to some special cases, occurring when $d=2$ and $d=4$.
More precisely we obtain the following results.
\begin{theorem}\label{th:n5}
	Let  $q \equiv 3 \pmod 8$, and $c \in \fqs$ such that  $(\frac{c}{2})^{\frac{q+1}{4}}=1$. Furthermore, let $u=\frac{q+5}{4}$ and $k$ be an even integer. Let
	\[h(x)=c+x^{u+k}+x^{qu+k+2},\]
	then the polynomial $f(x)=x^rh(x^{q-1})$ is a permutation polynomial of $\fqs$ if and only if $\gcd(r,\frac{q^2-1}{4})=1$ and $\gcd(r-k-1,\frac{q+1}{4})=1$.
\end{theorem}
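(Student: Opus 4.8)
The plan is to apply the reduction recalled in Section~\ref{sec:2}: the polynomial $f(x)=x^{r}h(x^{q-1})$ permutes $\fqs$ if and only if $\gcd(r,q-1)=1$ and the induced map $g(x)=x^{r}h(x)^{q-1}$ permutes the group $\mu_{q+1}$ of $(q+1)$-th roots of unity, on which $x^{q}=x^{-1}$. So after recording the condition $\gcd(r,q-1)=1$, the whole problem reduces to analysing $g$ on $\mu_{q+1}$. First I would simplify $h$ there. Writing $v=\tfrac{q+1}{4}$, so that $u=v+1$, the exponents collapse to $x^{u+k}=x^{k+1+v}$ and $x^{qu+k+2}=x^{k+1-v}$, whence
\[ h(x)=c+x^{k+1}\bigl(x^{v}+x^{-v}\bigr). \]
Since $v=\tfrac{q+1}{4}$, each $x\in\mu_{q+1}$ satisfies $(x^{v})^{4}=x^{q+1}=1$, so $x^{v}\in\mu_{4}=\{1,-1,i,-i\}$ and $t:=x^{v}+x^{-v}=x^{v}+\overline{x^{v}}\in\bfq$ is constant on each fibre of $x\mapsto x^{v}$, taking the values $2,-2,0,0$. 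This splits $\mu_{q+1}$ into the blocks $A_{1}=\mu_{v}$, $A_{-1}$, $A_{i}$ and $A_{-i}$, each of size $v$.

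Next I would use the hypothesis on $c$. From $(c/2)^{v}=1$ I get $c^{q+1}=4$, hence $c^{q}=4/c$ and $(2/c)^{v}=1$; these identities are what turn $g$ into a monomial on each block. On $A_{i}\cup A_{-i}$ we have $t=0$, so $h(x)=c\neq0$ and $g(x)=c^{q-1}x^{r}=\tfrac{4}{c^{2}}x^{r}$. On $A_{1}$ and $A_{-1}$ I would write $h(x)^{q}=x^{-(k+1)}\bigl(c^{q}x^{k+1}+t\bigr)$, so that $g(x)=x^{r-k-1}\,\frac{c^{q}x^{k+1}+t}{c+x^{k+1}t}$; substituting $c^{q}=4/c$ and $t=\pm2$ makes the factor $c+x^{k+1}t$ cancel, leaving $g(x)=\tfrac{2}{c}x^{r-k-1}$ on $A_{1}$ and $g(x)=-\tfrac{2}{c}x^{r-k-1}$ on $A_{-1}$. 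Before doing this I must check that $h$ never vanishes on $\mu_{q+1}$: on $A_{1}$, $h(x)=0$ would force $x^{k+1}=-c/2$, and raising to the $v$-th power gives $1=(x^{v})^{k+1}=(-c/2)^{v}=(-1)^{v}(c/2)^{v}=-1$, a contradiction, because $q\equiv3\pmod 8$ makes $v$ odd and $k$ is even; the case $A_{-1}$ is symmetric. This step is exactly where the congruence $q\equiv3\pmod8$, the parity of $k$, and the condition on $c$ are indispensable.

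It then remains to see that these monomials reassemble into a bijection of $\mu_{q+1}$. Taking $v$-th and $2v$-th powers of the images and using $(2/c)^{v}=1$, that $v$ is odd, and that $r$ is odd (forced because $\tfrac{q^{2}-1}{4}$ is even), I would show that $g$ sends $A_{1}$ onto $A_{1}$, $A_{-1}$ onto $A_{-1}$, and the coset $A_{i}\cup A_{-i}=\{x:x^{(q+1)/2}=-1\}$ onto itself. On the two cosets of $\mu_{v}$ the map is $x\mapsto x^{r-k-1}$ up to a unit, injective exactly when $\gcd\bigl(r-k-1,\tfrac{q+1}{4}\bigr)=1$; on the coset of $\mu_{(q+1)/2}$ it is $x\mapsto x^{r}$ up to a unit, injective exactly when $\gcd\bigl(r,\tfrac{q+1}{2}\bigr)=1$. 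Since the blocks have equal size, $g$ permutes $\mu_{q+1}$ if and only if both of these gcd conditions hold.

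Finally I would reconcile the conditions. Because $q\equiv3\pmod8$ gives $q-1=2\cdot(\text{odd})$ and $\tfrac{q+1}{2}=2\cdot\tfrac{q+1}{4}$ with $\tfrac{q+1}{4}$ odd, the pair $\gcd(r,q-1)=1$ and $\gcd\bigl(r,\tfrac{q+1}{2}\bigr)=1$ is together equivalent to the single statement $\gcd\bigl(r,\tfrac{q^{2}-1}{4}\bigr)=1$; combined with $\gcd\bigl(r-k-1,\tfrac{q+1}{4}\bigr)=1$ this gives precisely the asserted criterion. The converse is immediate, since if one of the gcd's exceeds $1$ then $g$ fails to be injective on the corresponding block (or $\gcd(r,q-1)>1$), so $f$ is not a permutation. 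I expect the genuine obstacle to be the non-vanishing of $h$ together with the block-preservation check, i.e.\ verifying that the images really land in the correct fibres of $x\mapsto x^{v}$; the monomial simplifications and the gcd bookkeeping are then routine.
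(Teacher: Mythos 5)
Your proposal is correct and follows essentially the same route as the paper: the same reduction to $g(x)=x^rh(x)^{q-1}$ on $\mu_{q+1}$, the same four-block decomposition by the value of $x^{(q+1)/4}\in\mu_4$ (your $t=x^v+x^{-v}\in\{2,-2,0\}$ is just a repackaging of the paper's preliminary lemma), the same non-vanishing argument using $q\equiv 3\pmod 8$ and $k$ even, and the same monomial simplifications and gcd bookkeeping. If anything, you are more explicit than the paper about block preservation and about why $\gcd(r,q-1)=1$ together with $\gcd\bigl(r,\tfrac{q+1}{2}\bigr)=1$ is equivalent to $\gcd\bigl(r,\tfrac{q^2-1}{4}\bigr)=1$, steps the paper leaves implicit.
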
 

\begin{theorem}\label{th:n6}
	Let $q \equiv 1 \pmod 2$ and $c \in \fqs$ such that  $(\frac{c}{2})^{\frac{q+1}{2}}=1$. Let
	\[h(x)=c+x^u(x^{\frac{q+1}{2}v}-1), \mbox{ with } 2 \nmid u, v\]
	then the polynomial $f(x)=x^rh(x^{q-1})$ is a permutation polynomial of $ \fqs $ if and only if $\gcd(r,\frac{q^2-1}{2})=1$ and $\gcd(r-u,\frac{q+1}{2})=1$.
\end{theorem}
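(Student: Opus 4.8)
The plan is to apply the standard reduction for polynomials of the shape $x^r h(x^{q-1})$ recalled in Section~\ref{sec:2}: $f(x)=x^rh(x^{q-1})$ permutes $\fqs$ if and only if $\gcd(r,q-1)=1$ and the map $g(x)=x^rh(x)^{q-1}$ permutes $\mu_{q+1}$. So after checking that $h$ has no zero on $\mu_{q+1}$ (so that $g$ is a well-defined self-map of $\mu_{q+1}$), the whole problem becomes the combinatorics of $g$ on the $(q+1)$-th roots of unity.

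First I would set $\zeta=c/2$; the hypothesis $(c/2)^{(q+1)/2}=1$ says exactly that $\zeta\in\mu_{q+1}$ is a square, say $\zeta=\gamma^{2a}$ for a fixed generator $\gamma$ of $\mu_{q+1}$. I would then split $\mu_{q+1}$ according to the value of $x^{(q+1)/2}\in\{1,-1\}$, i.e. into the subgroup $A$ of squares and the coset $B$ of non-squares, each of size $\tfrac{q+1}{2}$. Because $v$ is odd, $x^{(q+1)v/2}=(x^{(q+1)/2})^v$ equals $1$ on $A$ and $-1$ on $B$; hence $h(x)=c$ on $A$ and $h(x)=c-2x^u=2(\zeta-x^u)$ on $B$. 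Here $h$ is nonzero on $\mu_{q+1}$: on $A$ it equals $c\ne0$, while on $B$ the factor $\zeta-x^u$ cannot vanish, since $\zeta$ is a square but $x^u$ is a non-square (as $u$ is odd and $x\in B$).

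Next I would compute $g$ piecewise using $x^q=x^{-1}$, $\zeta^q=\zeta^{-1}$ and $2^{q-1}=1$. On $A$ one gets $h(x)^{q-1}=\zeta^{-2}$, so $g(x)=\zeta^{-2}x^r$; on $B$ the quotient collapses to $h(x)^{q-1}=-\zeta^{-1}x^{-u}$, so $g(x)=-\zeta^{-1}x^{r-u}$. Thus on each block $g$ is a constant times a monomial, which is the whole point of the partition method. Reading exponents modulo $q+1$, the map $g$ always sends $A$ into $A$, and $g|_A$ is a bijection of the cyclic group $A$ of order $\tfrac{q+1}{2}$ if and only if $\gcd(r,\tfrac{q+1}{2})=1$; likewise $g|_B$ is injective if and only if $\gcd(r-u,\tfrac{q+1}{2})=1$.

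The delicate point, and the step I expect to be the main obstacle, is bookkeeping the square/non-square class so that $g$ does not collapse $B$ into $A$. Since $\gcd(r,q-1)=1$ forces $r$ odd and $u$ is odd, $r-u$ is even, so $x^{r-u}\in A$; therefore $g(x)=-\zeta^{-1}x^{r-u}$ lies in $B$ for $x\in B$ precisely when $-\zeta^{-1}\in B$, i.e. when $-1$ is a non-square in $\mu_{q+1}$, i.e. when $\tfrac{q+1}{2}$ is odd ($q\equiv1\pmod4$). Conveniently this is not an extra hypothesis: as $r-u$ is even, the requirement $\gcd(r-u,\tfrac{q+1}{2})=1$ already forces $\tfrac{q+1}{2}$ odd, and in the opposite case $q\equiv3\pmod4$ both this gcd condition and the permutation property fail, so the equivalence holds vacuously. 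Granting $q\equiv1\pmod4$, $g$ permutes $\mu_{q+1}$ exactly when $g|_A$ and $g|_B$ are both bijections, i.e. when $\gcd(r,\tfrac{q+1}{2})=1$ and $\gcd(r-u,\tfrac{q+1}{2})=1$. Finally I would merge $\gcd(r,q-1)=1$ with $\gcd(r,\tfrac{q+1}{2})=1$ into the single condition $\gcd(r,\tfrac{q^2-1}{2})=1$, using $\tfrac{q^2-1}{2}=(q-1)\tfrac{q+1}{2}$, recovering exactly the statement; both implications of the equivalence run along these same steps.
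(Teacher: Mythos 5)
Your proof is correct and follows essentially the same route as the paper: the reduction via Theorem \ref{cr:1}, the splitting of $\mu_{q+1}$ into the two cosets determined by $x^{(q+1)/2}=\pm 1$, the piecewise collapse of $g$ to the monomials $\zeta^{-2}x^r$ and $-\zeta^{-1}x^{r-u}$, and the same gcd bookkeeping at the end. If anything, you are more careful than the paper about the case $q\equiv 3\pmod 4$ (where $-\mu_{\frac{q+1}{2}}$ coincides with $\mu_{\frac{q+1}{2}}$, so the paper's partition notation is loose there and the equivalence holds only vacuously), a point the paper addresses only indirectly through its concluding corollary.
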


M.\ Zieve has informed the author that the polynomials $f(x)$ in Theorems~1.1--1.4 are congruent
mod $(x^{q^2}-x)$ to polynomials whose permutation criteria was previously known, so that in fact the permutation
polynomials in these results are not new.  Specifically, the polynomials $f(x)$ in Theorems~1.3 and 1.4
are congruent to $cx^r$ mod~$(x^{q^2}-x)$, so that Theorems~1.3 and 1.4 follow from the well-known
permutation condition for monomials.  Likewise, the polynomials $f(x)$ in Theorems~1.1 and 1.2 are congruent
mod~$(x^{q^2}-x)$ to particular instances of the binomials in \cite[Cor.~5.3]{ZR}, so that Theorems~1.1 and 1.2
follow immediately from \cite[Cor.~5.3]{ZR}.

\section{Previous results}\label{sec:2}
Here we recall the basic strategy. The following theorem follows from a
well-known folklore \cite{PL,Zieve}.
Let $\mu_{q+1}$ be the set of $q+1$-th roots of unity.
\begin{theorem}\label{cr:1}
	Let $h(x) \in \fqs[x]$. Then $f(x)=x^r h(x^{q-1})$ is a permutation polynomial if and only if the followings are satisfied:
	\begin{enumerate}
		\item $ \gcd(r,q-1)=1 $;
		\item $ x^r h(x)^{q-1} $ permutes $\mu_{q+1}$.
	\end{enumerate}
\end{theorem}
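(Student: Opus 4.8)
The plan is to exploit the multiplicative structure of $\fqs^*$ relative to the power map $\psi\colon x\mapsto x^{q-1}$. Since $\fqs^*$ is cyclic of order $(q-1)(q+1)$, this map is a group homomorphism onto the unique subgroup $\mu_{q+1}$, with kernel $\F_q^*$; hence $\psi$ is surjective and every fiber $\psi^{-1}(y)$ is a coset of $\F_q^*$ of size $q-1$. The guiding identity is
\[\psi(f(x))=f(x)^{q-1}=(x^{q-1})^r\,h(x^{q-1})^{q-1}=g(\psi(x)),\qquad g(y):=y^r h(y)^{q-1},\]
valid whenever $f(x)\neq0$, so that $g$ is exactly the map in condition (2) and the square $\psi\circ f=g\circ\psi$ commutes on $\fqs^*$. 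First I would record that $f(0)=0$, so $f$ permutes $\fqs$ iff it permutes $\fqs^*$, and that condition (2) forces $h$ to be nonvanishing on $\mu_{q+1}$ (otherwise $g$ would send some root of unity to $0\notin\mu_{q+1}$), which guarantees $f(x)\neq0$ for $x\neq0$ and keeps the diagram well defined.

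The second step is to describe $f$ along a single fiber. Fix $y\in\mu_{q+1}$ and $x_0$ with $\psi(x_0)=y$; writing $x=x_0 t$ with $t\in\F_q^*$, the factor $h(x^{q-1})=h(y)$ is constant along the fiber, so $f(x_0 t)=\bigl(h(y)x_0^r\bigr)\,t^r$. Thus the restriction of $f$ to $\psi^{-1}(y)$ is the scaled monomial $t\mapsto c_y t^r$ on $\F_q^*$, and its image is the coset $c_y\F_q^*=\psi^{-1}(g(y))$ (one checks $\psi(c_y)=h(y)^{q-1}y^r=g(y)$). Consequently $f$ maps each fiber injectively onto $\psi^{-1}(g(y))$ precisely when $t\mapsto t^r$ is a bijection of $\F_q^*$, that is, exactly when $\gcd(r,q-1)=1$, and this single condition governs all fibers uniformly.

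For the reverse implication of the equivalence, assume (1) and (2). Injectivity of $f$ on $\fqs^*$ follows because $f(x_1)=f(x_2)$ gives $g(\psi(x_1))=g(\psi(x_2))$, hence $\psi(x_1)=\psi(x_2)$ by injectivity of $g$ in (2), placing $x_1,x_2$ in a common fiber, on which $t\mapsto t^r$ is injective by (1); injectivity on the finite set $\fqs$ yields a permutation. For the forward implication, if $f$ permutes $\fqs$ then it permutes $\fqs^*$, and surjectivity of $g$ on $\mu_{q+1}$ is read off from the diagram, since any $y'$ equals $\psi(f(x))=g(\psi(x))$ for a preimage $x$ of a chosen element of $\psi^{-1}(y')$; hence $g$ permutes the finite set $\mu_{q+1}$, giving (2), while injectivity of $f$ on one fiber forces $\gcd(r,q-1)=1$, giving (1).

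I expect the only genuinely delicate point to be bookkeeping rather than a deep obstacle: the group $\fqs^*$ does not split as $\mu_{q+1}\times\F_q^*$ (since $\gcd(q-1,q+1)=2$ for odd $q$), so one cannot simply factor the problem as a direct product. The argument must instead be organized around the surjection $\psi$ and its equal-sized fibers, taking care that the nonvanishing of $h$ on $\mu_{q+1}$ is exactly what makes the commuting square well defined on all of $\fqs^*$. This fiberwise descent through $\psi$ is the AGW-type mechanism underlying the folklore statement.
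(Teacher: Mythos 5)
Your proof is correct, but there is nothing in the paper to compare it against: the paper states this theorem as ``well-known folklore'' and simply cites Park--Lee and Zieve, offering no argument of its own. What you have written is essentially the standard proof from that cited literature, and it is the mechanism underlying the AGW criterion that the paper invokes: the power map $\psi(x)=x^{q-1}$ is a surjective homomorphism from $\fqs^*$ onto $\mu_{q+1}$ whose fibers are the $q+1$ cosets of $\F_q^*$, each of size $q-1$; the commuting identity $\psi\circ f=g\circ\psi$ with $g(y)=y^rh(y)^{q-1}$ reduces the permutation property of $f$ to $g$ permuting the base $\mu_{q+1}$ together with injectivity of $f$ on each fiber, where $f$ acts as the scaled monomial $t\mapsto c_y t^r$. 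Your treatment of the degenerate case is the right bookkeeping: you extract the nonvanishing of $h$ on $\mu_{q+1}$ from condition (2) in one direction, and (implicitly) from $f(0)=0$ plus injectivity of $f$ in the other, which is exactly what keeps the square well defined. One sentence is loosely phrased: ``its image is the coset $c_y\F_q^*$'' is only true when $\gcd(r,q-1)=1$; in general the image of a fiber is merely \emph{contained} in $c_y\F_q^*=\psi^{-1}(g(y))$, with equality precisely under condition (1). Since your next clause makes exactly this qualification, the logic is unaffected, but you should state the containment first and the equality as conditional, to avoid asserting mid-proof something that fails when (1) fails.
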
 
Thus, in order to get permutation polynomials of $\fqs$ we need to find permutations of $\mu_{q+1}$. 
We focus our attention on the case where $h(x)$ is a trinomial. 

\noindent In \cite{20} the authors presented a useful tool to investigate permutations of $\mu_{q+1}$. 
Let $d$ a positive integer such that $d\mid q+1$, $\xi$  be a primitive $ d $-th root of unity, $  S_0=\mu_{\frac{q+1}{d}} $ and $S_i=\xi^i S_0$ for $i \leq 1 \leq d-1$.

\begin{proposition}{\cite[Theorem 1.2]{20}}\label{1.1}
	Let $d$ be a positive integer such that $d \mid q+1$ and let $A_i \in \mu_{q+1}$ for $0\leq i \leq d-1$. For $g(x) \in \mathbb{F}_{q^2}[x]$, if
	\[g(x)=A_i x^{r_i}, \mbox{ for } \ x \in S_i,\]
	then $g(x)$ is a permutation of $\mu_{q+1}$ if and only if the following hold
	\begin{enumerate}
		\item $\gcd(r_i,\frac{q+1}{d})=1$, for $0 \leq i \leq d-1$;
		\item $A_i x_i^{r_i} \neq A_j x_j^{r_j}$ for $x_i \in S_i$ and $x_j \in S_j$.
	\end{enumerate} 
\end{proposition}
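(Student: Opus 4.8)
The plan is to exploit that $g$ is a self-map of the finite set $\mu_{q+1}$, so that $g$ is a permutation precisely when it is injective. The decisive structural observation is that $S_0=\mu_{\frac{q+1}{d}}$ is the unique subgroup of order $\frac{q+1}{d}$ of the cyclic group $\mu_{q+1}$, and that the sets $S_i=\xi^i S_0$ are exactly its $d$ cosets; hence $\{S_0,\dots,S_{d-1}\}$ partitions $\mu_{q+1}$ into classes of equal size $\frac{q+1}{d}$. I would therefore split injectivity of $g$ into two independent requirements: injectivity of each restriction $g|_{S_i}$, and disjointness of the images $g(S_i)$ for distinct $i$.

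For the first requirement I would analyse a single piece. Writing $x=\xi^i y$ with $y\in S_0$, the restriction becomes $g(x)=A_i\xi^{ir_i}y^{r_i}$, so $g|_{S_i}$ is injective if and only if the power map $y\mapsto y^{r_i}$ is injective on $S_0$. Since $S_0$ is cyclic of order $\frac{q+1}{d}$, this power map is a bijection exactly when $\gcd(r_i,\frac{q+1}{d})=1$, which is condition (1). I would then record the consequence that, under (1), the map $y\mapsto y^{r_i}$ permutes $S_0$, so that
\[g(S_i)=A_i\xi^{ir_i}S_0^{r_i}=A_i\xi^{ir_i}S_0\]
is again a full coset of $S_0$.

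For the second requirement I would use that, once (1) holds, every image $g(S_i)$ is a coset of $S_0$, and two cosets of a subgroup are either equal or disjoint. Thus the images are pairwise disjoint if and only if no two of them coincide, i.e. if and only if $A_i x_i^{r_i}\neq A_j x_j^{r_j}$ whenever $x_i\in S_i$, $x_j\in S_j$ with $i\neq j$, which is condition (2). Assembling the two requirements yields injectivity of $g$ on all of $\mu_{q+1}$, and since $g$ maps the finite set $\mu_{q+1}$ to itself, injectivity forces $g$ to be a permutation; a count of the $d$ pairwise disjoint image-cosets, each of size $\frac{q+1}{d}$, confirms surjectivity. Conversely, if $g$ permutes $\mu_{q+1}$ then it is injective, so each $g|_{S_i}$ is injective, which forces (1) by the power-map argument, and the images $g(S_i)$ are automatically disjoint, which forces (2).

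I do not expect a deep obstacle: the argument is essentially coset bookkeeping in a cyclic group. The point demanding care is the reduction in the first step, namely verifying that as $x,x'$ range over $S_i$ the quotient $x/x'$ ranges over all of $S_0$, so that injectivity of $g|_{S_i}$ is genuinely \emph{equivalent} to the gcd condition rather than merely implied by it. In the forward direction one must also keep track of the fact that condition (1) is precisely what upgrades each piece from ``injective'' to ``onto a full coset of $S_0$'', since it is this that turns condition (2) into the clean disjointness statement that disposes of cross-coset collisions.
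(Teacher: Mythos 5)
The paper never proves this proposition: it is quoted directly from \cite[Theorem 1.2]{20}, so there is no in-paper argument to compare yours against. Judged on its own merits, your argument is the natural one and its core is correct: splitting injectivity of $g$ into injectivity of each restriction $g|_{S_i}$ plus disjointness of the images, reducing injectivity of $g|_{S_i}$ to injectivity of $y\mapsto y^{r_i}$ on the cyclic group $S_0$ (hence to $\gcd(r_i,\frac{q+1}{d})=1$), noting that condition (2) is literally the disjointness of the images, and invoking finiteness of $\mu_{q+1}$ to pass from injectivity to bijectivity. Both directions of the equivalence are handled correctly.

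The one step you have not justified is your ``decisive structural observation'' that $S_0,\dots,S_{d-1}$ are the $d$ distinct cosets of $S_0$ and hence partition $\mu_{q+1}$. Under the stated hypothesis ($d\mid q+1$ only) this can fail: $\xi^iS_0=\xi^jS_0$ as soon as $\xi^{i-j}\in S_0$, and the image of $\langle\xi\rangle$ in $\mu_{q+1}/S_0$ has order $d/\gcd\bigl(d,\frac{q+1}{d}\bigr)$, not necessarily $d$. Concretely, for $q=7$, $d=4$ one has $S_0=\{1,-1\}$ and $\xi^2=-1\in S_0$, so $S_2=S_0$ and $S_3=S_1$: the four sets are neither pairwise disjoint nor do they cover $\mu_8$, and $g$ is not even well defined unless the defining formulas agree on the overlaps. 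This is arguably a defect of the statement as transcribed rather than of your proof --- the partition property is an implicit hypothesis needed for the proposition to parse at all --- and the paper itself acknowledges it: the Remark in Section \ref{sec:3} and Theorem \ref{th:omega} add exactly the missing condition $\gcd\bigl(d,\frac{q+1}{d}\bigr)=1$, under which the cosets $\xi^iS_0$, $0\le i\le d-1$, are pairwise distinct and your argument goes through verbatim. You should therefore state that condition (or the partition property itself) as a hypothesis rather than present it as a consequence of $d\mid q+1$.
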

Proposition \ref{1.1} gives a new approach to construct permutations of $\mu_{q+1}$ via monomials.
In \cite{20} it was used when $d=2$ and $d=3$, providing new classes of permutation polynomials.

\noindent In this paper, we extend this approach to the case $d > 3$. In fact, we  split $\mu_{q+1}$ into $d$ disjoint subsets using a primitive $ d $-th root of unity and we give conditions so that we can apply Result \ref{1.1} to our case. See Theorem \ref{th:omega}.

\section{New permutation trinomials over $\fqs$}\label{sec:3}

In this section we present the main results of this paper.
\begin{center}
	From now on $d$ is a positive integer such that $d \mid q+1$.
\end{center}
First, we need conditions which allows us to use Result \ref{1.1}.
Let $d$ be a positive integer such that $\gcd(d,\frac{q+1}{d})=1$ and consider a $d$-th primitive root of unity $\omega$.
We can split $\mu_{q+1}$ as $\mu_{q+1}=\bigcup_{i=0}^{d-1} \omega^i \mu_{\frac{q+1}{d}}$, with the subsets mutually disjoint. Indeed,
\begin{remark}
	The crucial point is to establish when the classes $ \omega^i \mu_{\frac{q+1}{d}} $ are all pairwise disjoint. Thus, we note that the subgroup generated by $\omega$ has order $d$, while $\mu_{\frac{q+1}{d}}$ has order $\frac{q+1}{d}$. This implies that  \[\langle \omega \rangle \cap \mu_{\frac{q+1}{d}} =\{1\} \iff \gcd\bigg(\frac{q+1}{d},d\bigg) =1. \]
	and we need to add this further condition.
\end{remark}
After that, we use the following Theorem as the key to prove permutations on $\mu_{q+1}$.
\begin{theorem}\label{th:omega}
	Let $ d $ be a positive integer such that $\gcd(d, \frac{q+1}{d})=1$ and $g(x) \in \fqs[x]$ be a monomial such that 
	\[g(x)=A \omega^{i k} y^n, \mbox{ if } x= \omega^i y \in \omega^i\mu_{\frac{q+1}{d}}\]
	with $A \in \mu_{q+1}$. Then, $g(x)$ is a permutation of $\mu_{q+1}$ if and only if 
	\begin{itemize}
		\item[(i)] $\gcd(\frac{q+1}{d},n )=1$;
		\item[(ii)] $\gcd(k,d)=1$.
	\end{itemize}
\end{theorem}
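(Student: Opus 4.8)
The plan is to exploit the coprimality hypothesis $\gcd(d,\frac{q+1}{d})=1$ to realize $\mu_{q+1}$ as an internal direct product, and thereby reduce the bijectivity of $g$ to two independent one-variable conditions. Since $\mu_{q+1}$ is cyclic of order $q+1$ while $\langle\omega\rangle=\mu_d$ and $\mu_{\frac{q+1}{d}}$ are its subgroups of coprime orders $d$ and $\frac{q+1}{d}$, the reasoning of the preceding Remark gives $\mu_{q+1}=\mu_d\cdot\mu_{\frac{q+1}{d}}$ with $\langle\omega\rangle\cap\mu_{\frac{q+1}{d}}=\{1\}$. Hence every $x\in\mu_{q+1}$ has a \emph{unique} representation $x=\omega^i y$ with $0\le i\le d-1$ and $y\in\mu_{\frac{q+1}{d}}$ — which is exactly the partition fixed in the statement — and the constant $A$ likewise factors uniquely as $A=\omega^a b$ with $0\le a\le d-1$ and $b\in\mu_{\frac{q+1}{d}}$.

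Next I would rewrite $g$ in these coordinates. For $x=\omega^i y$ we obtain
\[g(x)=A\,\omega^{ik}y^n=\omega^{a+ik}\,(b\,y^n),\]
where $\omega^{a+ik}\in\mu_d$ depends only on $i$ and $b\,y^n\in\mu_{\frac{q+1}{d}}$ depends only on $y$; in particular $g(x)\in\mu_{q+1}$. By uniqueness of the factorization this exhibits $g$ as the product map $\phi\T\psi$ under the identification $\mu_{q+1}\cong(\mathbb{Z}/d)\T\mu_{\frac{q+1}{d}}$, where $\phi(i)=a+ik \pmod d$ is an affine map of $\mathbb{Z}/d$ and $\psi(y)=b\,y^n$ is a map of $\mu_{\frac{q+1}{d}}$.

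The key step is then the elementary fact that a product map $\phi\T\psi$ of finite sets is a bijection if and only if each factor is. Applying it, $\phi$ is a bijection of $\mathbb{Z}/d$ precisely when multiplication by $k$ is invertible mod $d$ (the additive shift by $a$ being harmless), that is, when $\gcd(k,d)=1$; and $\psi$ is a bijection of $\mu_{\frac{q+1}{d}}$ precisely when the power map $y\mapsto y^n$ is, which on a cyclic group of order $\frac{q+1}{d}$ holds if and only if $\gcd(n,\frac{q+1}{d})=1$ (multiplication by the fixed unit $b$ being again harmless). These are exactly conditions (ii) and (i), giving the stated equivalence. Equivalently, one may argue injectivity directly: if $g(\omega^i y)=g(\omega^j z)$, uniqueness of the product decomposition forces $\omega^{ik}=\omega^{jk}$ and $y^n=z^n$ separately, and on the finite set $\mu_{q+1}$ injectivity is the same as bijectivity.

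I expect the only genuinely delicate point to be the first paragraph: one must make sure that the hypothesis $\gcd(d,\frac{q+1}{d})=1$ really produces the internal direct product, so that both $x$ and $A$ acquire unique coordinates and the two factors of $g$ are truly independent. Once this product structure is secured, the remaining steps are routine invocations of the standard bijectivity criteria for power maps and affine maps on cyclic groups.
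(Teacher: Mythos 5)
Your proposal is correct, but it reaches the conclusion by a different route than the paper. The paper's proof is a reduction to Proposition \ref{1.1} (the criterion of \cite{20}): condition (1) of that proposition is exactly your $\gcd(n,\frac{q+1}{d})=1$, and condition (2) (disjointness of the images of distinct cosets $\omega^i\mu_{\frac{q+1}{d}}$) is verified by computing that $A\omega^{ik}y^n\ne A\omega^{jk}y^n$ amounts to $\omega^{(j-i)k}\notin\mu_{\frac{q+1}{d}}$, i.e.\ to $d\nmid(j-i)k$ for all $0<j-i<d$, which is $\gcd(k,d)=1$. You bypass that cited machinery entirely: from $\gcd\bigl(d,\frac{q+1}{d}\bigr)=1$ you extract the internal direct product $\mu_{q+1}=\mu_d\cdot\mu_{\frac{q+1}{d}}$, identify $g$ with the product map $(i,y)\mapsto(a+ik \bmod d,\, b\,y^n)$ on $(\mathbb{Z}/d)\times\mu_{\frac{q+1}{d}}$ after factoring $A=\omega^a b$, and invoke the elementary fact that a product map of nonempty finite sets is bijective if and only if each factor is. What each approach buys: the paper's argument is shorter once Proposition \ref{1.1} is granted, and it stays within the framework used throughout Section \ref{sec:3}; yours is self-contained, makes the role of the coprimality hypothesis completely transparent, and separates the two conditions more cleanly --- the unique factorization forces the $\mu_d$- and $\mu_{\frac{q+1}{d}}$-components to match independently, which sidesteps a small notational conflation in the paper's proof, where the disjointness inequality is written with the same symbol $y$ on both sides even though the two coset representatives should be independent (the paper's conclusion survives this because the relevant image sets are subgroups of $\mu_{\frac{q+1}{d}}$ meeting $\langle\omega\rangle$ trivially, but your argument never has to confront the issue). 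The one extra ingredient your route requires, factoring the constant $A$ into its $\mu_d$- and $\mu_{\frac{q+1}{d}}$-parts, is harmless and you justify it correctly.
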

\begin{proof}
	Note that the first condition of Result \ref{1.1} is satisfied since $\gcd(n,\frac{q+1}{d})=1$. On the other hand, choose $i < j <d$. Then $ A \omega^{i k} y^n \ne A \omega^{j k} y^n $ if and only if 
	\[\omega^{(j-i)k} \notin \mu_{\frac{q+1}{d}},\]
	which is equivalent to
	\[\omega^{(j-i)k\frac{q+1}{d}}\ne 1. \]
	Since $\omega^r = 1$ if and only if $d \mid r$, we obtain
	\[\omega^{(j-i)k\frac{q+1}{d}} \ne 1 \iff d \nmid (j-i)k, \]
	which means $\gcd(k,d)=1$. In fact $j-i$ runs over any positive integer less than $d$.
\end{proof}

\subsection{Proof of Theorem \ref{th:n1}}\label{ss:pari}

The proof is obtained using a preliminary Lemma.
\begin{center}
	Let $d$ be an even integer.
\end{center}
Fix a primitive $d$-th root of unity, say $\omega$.

\begin{lemma}\label{lm:1}
	Let $v=\frac{q+1}{d}$ and $v_1=(d-1)v$. Then
	\begin{equation}\label{v}
		x^{v_1+k}=x^{qv+k},
	\end{equation}
	for every $x \in \mu_{q+1}$. Furthermore $x^{-v_1}=\omega^{i s}$, if $ x \in \omega^i \mu_{\frac{q+1}{d}}$ and $s \equiv v \pmod d$.
\end{lemma}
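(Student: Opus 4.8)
The plan is to reduce both assertions to elementary congruences, using two facts: every $x \in \mu_{q+1}$ satisfies $x^{q+1}=1$, so exponents of $x$ may be read modulo $q+1$; and $\omega^d=1$, so exponents of $\omega$ may be read modulo $d$. The one algebraic identity that powers everything is $dv = d\cdot\frac{q+1}{d} = q+1$, equivalently $v_1 = (d-1)v = (q+1)-v$.

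For the first statement, since $x^{q+1}=1$ on $\mu_{q+1}$ it is enough to verify $v_1+k \equiv qv+k \pmod{q+1}$, i.e.\ $v_1 \equiv qv \pmod{q+1}$. From $(q+1)v \equiv 0 \pmod{q+1}$ one gets $qv \equiv -v \pmod{q+1}$, while $v_1 = (q+1)-v \equiv -v \pmod{q+1}$; the two sides agree, and multiplying the resulting identity $x^{v_1}=x^{qv}$ by $x^k$ yields \eqref{v}.

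For the second statement, I would write $x=\omega^i y$ with $y\in\mu_{\frac{q+1}{d}}$, so that $y^v=1$, and expand $x^{-v_1}=\omega^{-iv_1}\,y^{-v_1}$. Because $v_1=(d-1)v$, the $y$-factor satisfies $y^{-v_1}=(y^v)^{-(d-1)}=1$ and drops out. To simplify $\omega^{-iv_1}$ I would reduce the exponent modulo $d$: from $d-1\equiv-1\pmod d$ we get $-v_1=-(d-1)v\equiv v\pmod d$, hence $\omega^{-iv_1}=\omega^{iv}$; finally $s\equiv v\pmod d$ together with $\omega^d=1$ gives $\omega^{iv}=\omega^{is}$, which is exactly the claimed formula $x^{-v_1}=\omega^{is}$.

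No real obstacle arises; the only thing needing care is keeping the three moduli straight --- exponents of $x$ modulo $q+1$, exponents of $y$ killed by $y^v=1$, and exponents of $\omega$ modulo $d$. It is worth noting that the lemma holds without invoking the coprimality hypothesis $\gcd(d,\frac{q+1}{d})=1$; that condition is only what guarantees the cosets $\omega^i\mu_{\frac{q+1}{d}}$ are disjoint (and hence that the index $i$ attached to $x$ is well defined), but it plays no role in the two displayed identities themselves.
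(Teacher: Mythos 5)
Your proof is correct. The only substantive difference from the paper lies in how the first identity $x^{v_1+k}=x^{qv+k}$ is handled: the paper proves \emph{both} claims at once from the decomposition $x=\omega^i y$, showing that $x^{v_1}$ and $x^{qv}$ each equal $\omega^{-is}$ (using $\omega^{q+1}=\omega^d=1$, i.e.\ $q\equiv-1\pmod d$), whereas you prove the first identity by pure exponent arithmetic modulo $q+1$ ($v_1\equiv -v\equiv qv\pmod{q+1}$) and only invoke the decomposition for the second claim. Your route for the first identity is slightly more economical and, as you correctly observe, shows that it holds for every $x\in\mu_{q+1}$ with no hypothesis on $\gcd\big(d,\frac{q+1}{d}\big)$; the paper's proof, by contrast, implicitly uses that hypothesis already in the first identity, since the assertion that every $x\in\mu_{q+1}$ can be written as $\omega^i y$ with $y\in\mu_{\frac{q+1}{d}}$ requires the cosets $\omega^i\mu_{\frac{q+1}{d}}$ to cover $\mu_{q+1}$. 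What the paper's approach buys is uniformity --- one computation of $x^{-v_1}$ as a power of $\omega$ delivers both statements simultaneously --- which is also the form in which the lemma is actually applied in the proof of Theorem \ref{th:n1}. Your treatment of the second claim (killing the $y$-factor via $y^v=1$, then reducing the $\omega$-exponent mod $d$ using $d-1\equiv-1$ and $s\equiv v$) coincides with the paper's.
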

\begin{proof}
	Every $x \in \mu_{q+1}$ can be written as $x=\omega ^i y$, with $y \in \mu_{\frac{q+1}{d}}$. This implies that
	\[x^{(d-1)\frac{q+1}{d}}=\omega^{(d-1)is}=\omega^{-is}\]
	and 
	\[x^{qv}=\omega^{q i s}= \omega^{-is}\]
	where we used that $\omega^{q+1}=\omega^d=1$.
\end{proof}

\begin{proof}[Theorem \ref{th:n1}]
	By Corollary \ref{cr:1}, $f(x)$ permutes $ \fqs $ if and only if $\gcd(r,q-1)=1$ and the polynomial 
	\[g(x)=x^rh(x)^{q-1}\]
	permutes $\mu_{q+1}$. For every $x \in \mu_{q+1}$ we have 
	\[g(x)=x^r(c+2x^{v_1+k})^{q-1}=x^r\frac{c^q+2x^{-v_1-k}}{c+2x^{v_1+k}}.\]
	Note that $v_1+k$ is an even integer, as $k$ is odd and $d$ even. Moreover $q \equiv 1 \pmod4$ implies that $ (-1)^\frac{q+1}{2}=-1 $. Thus, from $ \big(\frac{c}{2}\big)^\frac{q+1}{2}=1 $, we obtain $c \ne  -2 x^{v_1+k}$ for every $x \in \mu_{q+1}$.
	Together with Lemma \ref{lm:vev1}, this leads us to the following simplification:
	\[g(x)=\frac{2}{c}x^{-v1}x^{r-k}=\frac{2}{c}\omega^{is}(\omega^i y)^{r-k}=\frac{2}{c}\omega^{i(s+r-k)}y^{r-k},\]
	if $x \in \omega^i \mu_{\frac{q+1}{d}}$ and $x=\omega^i y$, $y \in \mu_{\frac{q+1}{d}}$.
	Note that  $\frac{2}{c} \in \mu_{q+1}$.
	From Theorem \ref{th:omega},  $g(x)$ is a bijection of $\fqs$ if and only if
	\[ \gcd(r-k,v)=1 \mbox{ and }\gcd(s+r-k,d)=1.\]
	Combining that with $\gcd(q-1,r)=1$ we complete the proof.
\end{proof}

\subsection{Proof of Theorem \ref{th:n2} } \label{ss:dispari}

The proof is obtained using a preliminary Lemma.
\begin{center}
	Let $d$ be an odd integer.
\end{center}
Fix a primitive $d$-th root of unity, say $\omega$.

\begin{lemma}\label{lm:2}
	Let $u = \frac{q+1}{d}+1$ and $u_1=(d-1)\frac{q+1}{d}+1$.
	Then
	\begin{equation}\label{eq:u}
		x^{u_1+k} = x^{qu+k+2},
	\end{equation}
	for every $x \in \mu_{q+1}$. Furthermore $x^{-u_1}=\omega^{is}x^{-1}$, if $x \in \omega^i \mu_{\frac{q+1}{d}}$ and $s \equiv \frac{q+1}{d} \pmod d$.
\end{lemma}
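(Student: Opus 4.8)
The plan is to mirror the structure of Lemma~\ref{lm:1}, exploiting the decomposition $\mu_{q+1}=\bigcup_{i=0}^{d-1}\omega^i\mu_{\frac{q+1}{d}}$. Writing $v=\frac{q+1}{d}$, I would fix an arbitrary $x\in\mu_{q+1}$ and express it as $x=\omega^i y$ with $y\in\mu_{\frac{q+1}{d}}$, so that the three governing relations $x^{q+1}=1$, $\omega^d=1$ and $y^{v}=1$ are all available, together with the standing hypothesis $q\equiv -1\pmod d$ and the defining congruence $s\equiv v\pmod d$.

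For the first identity, the cleanest route is to observe that it suffices to prove $x^{u_1}=x^{qu+2}$, i.e. that the exponent difference is a multiple of $q+1$. A direct computation with $u=v+1$, $u_1=(d-1)v+1$ and $dv=q+1$ gives $u_1-(qu+2)=(d-1)v+1-q(v+1)-2=-v(q+1)$, whence $x^{u_1-qu-2}=(x^{q+1})^{-v}=1$ and \eqref{eq:u} follows after multiplying by $x^k$. Alternatively, and more in the spirit of Lemma~\ref{lm:1}, one can evaluate both sides through the decomposition: using $y^v=1$ one reduces $x^{(d-1)v}=\omega^{i(d-1)v}=\omega^{-is}$, and using $q\equiv-1\pmod d$ one gets $x^{qv}=\omega^{iqv}=\omega^{-is}$; combining these with $x^q=x^{-1}$ yields $x^{u_1}=\omega^{-is}x=x^{qu+2}$.

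For the second identity I would write $x^{-u_1}=x^{-(d-1)v}\,x^{-1}$ and again kill the $y$-part via $y^v=1$, leaving $x^{-(d-1)v}=\omega^{-i(d-1)v}$. The only point to check is the reduction of the $\omega$-exponent modulo $d$: since $d-1\equiv -1$ and $s\equiv v\pmod d$, one has $-i(d-1)v\equiv iv\equiv is\pmod d$, so $x^{-(d-1)v}=\omega^{is}$ and hence $x^{-u_1}=\omega^{is}x^{-1}$, as claimed.

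I do not expect a genuine obstacle here, since the statement is a bookkeeping lemma. The one place requiring care is the treatment of the additive shifts ``$+1$'' in $u,u_1$ and ``$+2$'' in the exponent $qu+k+2$, which are exactly what distinguishes this odd-$d$ case from Lemma~\ref{lm:1}: one must track the leftover factors $x^{\pm1}$ and verify that they cancel, which they do because $x^{qu+2}$ contributes $x^q\cdot x^2=x$ against the $x$ coming from $x^{u_1}$. Ensuring that the residue $s\equiv v\pmod d$ is used consistently, rather than, say, $s\equiv q\pmod d$, is the only subtlety worth double-checking.
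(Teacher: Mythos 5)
Your proposal is correct and is essentially the paper's own argument: the paper proves this lemma in one line, observing that $u=v+1$ and $u_1=v_1+1$ and invoking Lemma~\ref{lm:1}, and your bookkeeping (killing the $y$-part via $y^{v}=1$, reducing $\omega$-exponents mod $d$ using $q\equiv-1\pmod d$, and cancelling the leftover $x^{q+1}$ factor) is exactly what that one-liner leaves implicit. Your additional observation that $u_1-(qu+2)=-v(q+1)$, so that \eqref{eq:u} is immediate from $x^{q+1}=1$ without any decomposition, is a clean self-contained shortcut for the first identity, though the decomposition is still needed for the second claim.
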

\begin{proof}
	The proof is obtained from the one of Lemma \ref{lm:1} noting that $u=v+1$ and $u_1=v_1+1$.
\end{proof}

\begin{proof}[Proof of Theorem \ref{th:n2}]
	By Corollary \ref{cr:1}, $f(x)$ permutes $\fqs$ if and only if $\gcd(r,q-1)=1$ and the polynomial defined by
	\[g(x)=x^r h(x)^{q-1}\]
	permutes $\mu_{q+1}$.\\
	Using Lemma \ref{lm:2}, for every $x \in \mu_{q+1}$ we have:
	\begin{equation}
		g(x)=x^r(c+2x^{u_1+k})^{q-1}= x^r \frac{c^q+2x^{-u_1-k}}{c+2x^{u_1+k}}.
	\end{equation}
	Since $ \big(\frac{c}{2}\big)^\frac{q+1}{2}=1 $ and $ (-x^{u_1+k})^\frac{q+1}{2}=-1 $, from the assumptions that $q \equiv 1 \pmod 4$ and $k,d$ odd, we cannot have $c = - 2x^{u_1+k}$.
	This allows us to simplify as follows:
	
	\begin{equation}
		g(x)=\frac{2}{c} x^{-u_1} x^{r-k}=\frac{2}{c} \omega^{is} x^{r-k-1}, \mbox{ for } x \in \omega^i\mu_{\frac{q+1}{d}},
	\end{equation}
	where $\frac{q+1}{d} \equiv s \pmod d $. \\
	Note that $\frac{2}{c} \in \mu_{q+1}$ and let $x=\omega^i y $, with $y \in \mu_{\frac{q+1}{d}}$. This implies that
	\begin{equation}
		g(x)=\frac{2}{c} \omega^{i(s+r-k-1)} y^{r-k-1} 
	\end{equation}
	Thus, according to Theorem \ref{th:omega},  $g(x)$ is a permutation of $\mu_{q+1}$ if and only if \[\gcd(r-k-1, \frac{q+1}{d})=1 \mbox{ and }  \gcd(s+r-k-1,d)=1.\]
	Combining these results with $ \gcd(r,q-1)=1 $ we complete the proof. 
\end{proof}

\subsection{Proof of Theorems \ref{th:n3} and \ref{th:n4}} \label{ss:general}
In this subsection we use equation \eqref{v} and \eqref{eq:u} in order to give other constructions of polynomials permuting $\fqs$. 
Let $d$ be any positive integer such that $d \mid q+1$.
\begin{proof}[Proof of Theorem \ref{th:n3}]$ $
	We have that \[g(x)=x^r h(x)^{q-1}\] is permuting $\mu_{q+1}$ if and only if $\gcd(r,q-1)=1$. Indeed, for every element $x \in \mu_{q+1}$, $g(x)$ can be simplified as 
	\[g(x)=c^{q-1}x^r.\]
	Thus, proof follows from Corollary \ref{cr:1}.
\end{proof}
Similarly we obtain the proof of Theorem \ref{th:n4}, which we omit.

\section{Other new constructions} \label{sec:4}
We now provide other constructions of permutation polynomials for $d=4$ and $d=2$. 
\subsection{Proof of Theorem \ref{th:n5}} 
Assume that $4 \mid q+1$ and let $\omega$ be a $4$-th primitive root of unity.
Recall that $\mu_{q+1}$ can be divided into four classes
\begin{itemize}
	\item $\mu_{\frac{q+1}{4}}:=\{a \in \fqs | a^{\frac{q+1}{4}}= 1 \} $
	\item  $-\mu_{\frac{q+1}{4}}:=\{a \in \fqs | a^{\frac{q+1}{4}}= -1\}$
	\item  $\mu'_{\frac{q+1}{4}}:=\{a \in \fqs | a^{\frac{q+1}{4}}= \omega \mbox{ and } \omega^2=-1\}$
	\item $-\mu'_{\frac{q+1}{4}}:=\{a \in \fqs | a^{\frac{q+1}{4}}= -\omega \mbox{ and } \omega^2=-1\}$
\end{itemize}
\begin{lemma}
	Let $u = \frac{q+5}{4}=\frac{q+1}{4}+1$. Then for $x \in \mu_{q+1}$ we have
	\begin{itemize}
		\item $ x^u = x ^{2+qu}=x ^{2-u} $, if $x \in \pm\mu_{\frac{q+1}{4}}$.
		\item $ x^u = -x ^{2+qu}=-x ^{2-u} $, if $x \in \pm\mu_{\frac{q+1}{4}}'$.
	\end{itemize}
\end{lemma}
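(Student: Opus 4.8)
The plan is to reduce every exponent modulo $q+1$ using the defining relation $x^{q+1}=1$, and then read off the two cases from the value of the class invariant $x^{(q+1)/4}$. First I would record the elementary identity that for $x \in \mu_{q+1}$ one has $x^q = x^{-1}$. Raising to the power $u$ and multiplying by $x^2$ gives $x^{2+qu} = x^{2}\,x^{qu} = x^{2}\,x^{-u} = x^{2-u}$. This disposes of the second equality $x^{2+qu}=x^{2-u}$ in both cases at once, independently of which class $x$ lies in, so the only real content is the first equality relating $x^u$ to $x^{2-u}$.

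For that equality I would compute the exponent difference directly: $u-(2-u)=2u-2=2\left(\tfrac{q+1}{4}+1\right)-2=\tfrac{q+1}{2}$. Hence
\[
\frac{x^u}{x^{2-u}} = x^{2u-2} = x^{(q+1)/2} = \left(x^{(q+1)/4}\right)^{2},
\]
so the sign relating $x^u$ to $x^{2-u}$ is precisely the square of the class invariant $x^{(q+1)/4}$.

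Finally I would split into the four classes according to the decomposition of $\mu_{q+1}$ recalled above. If $x \in \pm\mu_{\frac{q+1}{4}}$, then $x^{(q+1)/4}=\pm 1$, whose square is $1$, giving $x^u = x^{2-u}$; combined with the first step this yields $x^u = x^{2+qu}=x^{2-u}$. If instead $x \in \pm\mu'_{\frac{q+1}{4}}$, then $x^{(q+1)/4}=\pm\omega$, whose square is $\omega^2=-1$, giving $x^u = -x^{2-u}$, and hence $x^u = -x^{2+qu}=-x^{2-u}$. This establishes all four displayed identities.

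I do not expect a genuine obstacle here; the argument is a short exponent manipulation. The only point requiring care is the exponent arithmetic, namely verifying that $2u-2=\tfrac{q+1}{2}$ and correctly reducing $x^{qu}$ to $x^{-u}$ through $x^{q+1}=1$, together with keeping track of which of the four class invariants $\{1,-1,\omega,-\omega\}$ occurs. Everything else is a direct substitution.
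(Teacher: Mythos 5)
Your proof is correct and follows essentially the same route as the paper's (one-line) proof: a direct exponent computation in $\mu_{q+1}$ combined with a case analysis on the class invariant $x^{(q+1)/4}\in\{\pm 1,\pm\omega\}$. If anything, your organization is slightly cleaner: you isolate the universal identity $x^{2+qu}=x^{2-u}$ first and then reduce the sign to $\bigl(x^{(q+1)/4}\bigr)^2$, so you only need $\omega^2=-1$, whereas the paper's proof invokes the equivalent facts $\omega^q=-\omega$ and $\omega^{-1}=-\omega$.
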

\begin{proof}
	We use that $\omega^q = -\omega$ and $\frac{1}{\omega}=-\omega$. 
\end{proof}

\begin{proof}[Proof of Theorem \ref{th:n5}]	By Corollary \ref{cr:1}, $f(x)$ permutes $\fqs$ if and only if $\gcd(r,q-1)=1$ and the polynomial defined by $ g(x)=x^rh(x)^{q-1} $ permutes $\mu_{q+1}$. In this case we have
	\begin{itemize}
		\item $ g(x)=x^r(c+2x^kx^u)=x^r\frac{c^q+2x^{-u}x^{-k}}{c+2x^ux^k} $ if $x \in \pm\mu_{\frac{q+1}{4}}$.
		\item $g(x)=c^{q-1}x^r$ if $x \in \pm\mu_{\frac{q+1}{4}}'$.
	\end{itemize}
	In the first case, since $(\frac{c}{2})^{\frac{q+1}{4}}=1$ and $(-x^{u+k})^{\frac{q+1}{4}}=(-1)^{\frac{q+1}{4}}$ with $ \frac{q+1}{4} $ an odd integer, we have $c+2x^{u+k} \ne 0 $. Hence $g(x)$ can be simplified as
	\[g(x)=\begin{cases}
		\begin{aligned}
			&\frac{2}{c}x^{r-k-1}, x \in \mu_{\frac{q+1}{4}}\\
			-&\frac{2}{c}x^{r-k-1}, x \in -\mu_{\frac{q+1}{4}}
		\end{aligned}
	\end{cases},\] which permutes $\pm\mu_{\frac{q+1}{4}}$ if and only if $\gcd(r-k-1,\frac{q+1}{4})=1$.\\
	Similarly in the second case we can simplify $g(x)$ as $g(x)=\pm (\frac{2}{c})^2 \omega ^r x^r$, for $x \in \pm \mu_{\frac{q+1}{4}}'$.
	Then $g(x)$ permutes $\pm \mu_{\frac{q+1}{4}}'$ if and only if $\gcd(r,\frac{q+1}{4})=1$.\\
	Combining these results we obtain that $f(x)$ is a permutation polynomial of $\fqs$ if and only if $\gcd(r,\frac{q^2-1}{4})=1$ and $\gcd(r-k-1,\frac{q+1}{4})=1$.
\end{proof}

\subsection{Proof of Theorem \ref{th:n6}}
Assume now that $2 \mid q+1 $.
\begin{proof}[Theorem \ref{th:n6}]In this case the polynomial $g(x) = x^r h(x)^{q-1}$ is defined by
	\begin{equation}
		g(x)=\begin{cases}
			\begin{aligned}
				&c^{q-1}x^r,&x \in \mu_{\frac{q+1}{2}} \\
				&x^r(c-2x^u)^{q-1}, &x \in -\mu_{\frac{q+1}{2}}
			\end{aligned}
		\end{cases}
	\end{equation}	
	Since $(\frac{c}{2})^{\frac{q+1}{2}}=1$, $g(x)$ permutes $\mu_{\frac{q+1}{2}}$ if and only if $\gcd(r,\frac{q+1}{2})=1 $.\\
	Furthermore for every $x \in -\mu_{\frac{q+1}{2}}$ we have 
	\[c-2x^u \neq 0 \iff 2 \nmid u \]
	This implies that we can simplify $g(x)$ as $g(x)=(-1)^{r-u}\frac{2}{c}x^{r-u}$, which permutes $-\mu_{\frac{q+1}{2}}$ if and only if $r$ is odd and $\gcd(r-u,\frac{q+1}{2})=1$.\\
	Combining these results, we obtained that $f(x)$ is a permutation polynomials of $\fqs$ if and only if $\gcd(r-u,\frac{q+1}{2})=1$ and $\gcd(r,\frac{q^2-1}{2})=1$.
\end{proof}
Note that for $v$ an even integer we have the standard case $f(x)=c x^r$ for $c \in \fqs$.

\begin{corollary}
	Let $ q \equiv 3 \pmod 4 $ and $(\frac{c}{2})^{\frac{q+1}{2}}=1$.\\
	The polynomial $f(x)=x^rh(x^{q-1})$, where
	\[h(x)=c+x^u(x^{\frac{q+1}{2}v}-1), \mbox{ with } 2 \nmid u, v\]  
	is not a permutation polynomial of $\fqs$.
\end{corollary}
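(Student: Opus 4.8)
The plan is to run the same coset analysis used in the proof of Theorem~\ref{th:n6}, but to track parities carefully, since the hypothesis $q\equiv 3\pmod 4$ changes where $-1$ sits. By Theorem~\ref{cr:1} it suffices to show that $g(x)=x^rh(x)^{q-1}$ fails to permute $\mu_{q+1}$ for \emph{every} $r$. First I would split $\mu_{q+1}=S_1\sqcup S_2$ with $S_1=\mu_{\frac{q+1}{2}}=\{x:x^{\frac{q+1}{2}}=1\}$ and $S_2=\{x:x^{\frac{q+1}{2}}=-1\}$, and observe that because $v$ is odd we get $x^{\frac{q+1}{2}v}-1=0$ on $S_1$ and $x^{\frac{q+1}{2}v}-1=-2$ on $S_2$. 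Exactly as in Theorem~\ref{th:n6} this yields $g(x)=c^{q-1}x^r$ on $S_1$ and, using $2\nmid u$ to guarantee $c-2x^u\neq 0$, a simplification of the form $g(x)=\frac{2}{c}\,x^{r-u}$ on $S_2$.

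The key structural observation is that, writing $w=c/2\in\mu_{\frac{q+1}{2}}$, both multiplicative constants $c^{q-1}=w^{-2}$ and $\frac{2}{c}=w^{-1}$ lie in $S_1$, and that $q\equiv 3\pmod 4$ forces $\frac{q+1}{2}$ to be even and hence $-1\in S_1$. I would then argue by cases on the parity of $r$. If $r$ is even, then $\gcd\!\big(r,\frac{q+1}{2}\big)\ge 2$, so the monomial $x\mapsto c^{q-1}x^r$ cannot permute the cyclic group $S_1$, and $g$ already fails on $S_1$. If $r$ is odd, then since $u$ is odd the exponent $r-u$ is even, so for $x\in S_2$ we have $(x^{r-u})^{\frac{q+1}{2}}=(-1)^{r-u}=1$, i.e.\ $x^{r-u}\in S_1$; combined with $\frac{2}{c}\in S_1$ this gives $g(S_2)\subseteq S_1$. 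Since $g(S_1)\subseteq S_1$ as well, we conclude $g(\mu_{q+1})\subseteq S_1\subsetneq\mu_{q+1}$, so $g$ is not surjective. In either case $g$ does not permute $\mu_{q+1}$, and by Theorem~\ref{cr:1} the polynomial $f$ is not a permutation polynomial of $\fqs$, whatever $r$ may be.

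I do not expect a serious obstacle here: the reduction of $g$ on each coset is identical to that in Theorem~\ref{th:n6}, so the only real work is the bookkeeping of where $-1$ and the constants $w^{-1},w^{-2}$ live. The one point that must be handled with care is that the parametrization $x=-y$ with $y\in\mu_{\frac{q+1}{2}}$ used for $q\equiv 1\pmod 4$ is no longer available, since now $\{-y:y\in\mu_{\frac{q+1}{2}}\}=\mu_{\frac{q+1}{2}}$ rather than the coset $S_2$. Thus $S_2$ should be treated intrinsically through the condition $x^{\frac{q+1}{2}}=-1$ instead of via a sign substitution. This is precisely the feature that produces incompatible parity requirements on $r$ (odd, to permute $S_1$; even, to keep $g(S_2)$ inside $S_2$) and hence the non-existence of a permutation.
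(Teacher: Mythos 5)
Your argument is correct, and it is actually more self-contained than what the paper offers: the corollary is stated there without any proof, the intended justification being that the two conditions of Theorem~\ref{th:n6} become incompatible when $q\equiv 3\pmod 4$ --- indeed $\frac{q+1}{2}$ is then even, so $\gcd\!\big(r,\frac{q^2-1}{2}\big)=1$ forces $r$ odd while $\gcd\!\big(r-u,\frac{q+1}{2}\big)=1$ forces $r-u$ odd, i.e.\ $r$ even since $u$ is odd. Your route of rerunning the coset analysis from scratch is a genuine improvement rather than a redundancy, because the proof of Theorem~\ref{th:n6} as written partitions $\mu_{q+1}$ into $\mu_{\frac{q+1}{2}}$ and $-\mu_{\frac{q+1}{2}}$, and this is \emph{not} a partition when $q\equiv 3\pmod 4$ (then $-1\in\mu_{\frac{q+1}{2}}$, so $-\mu_{\frac{q+1}{2}}=\mu_{\frac{q+1}{2}}$); replacing the second piece by the intrinsic set $S_2=\{x: x^{\frac{q+1}{2}}=-1\}$, as you do, is exactly what is needed to make the analysis legitimate in this regime, and your dichotomy (for $r$ even, $x\mapsto c^{q-1}x^r$ fails to be injective on the cyclic group $S_1$ of even order; for $r$ odd, $g(S_1)\cup g(S_2)\subseteq S_1$, so $g$ misses $S_2$ entirely) is sound. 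One small slip: on $S_2$ the correct simplification is $g(x)=-\frac{2}{c}x^{r-u}$ rather than $+\frac{2}{c}x^{r-u}$ (from $c^q=4/c$ one gets $(c-2x^u)^{q-1}=-2/(cx^u)$), but this is harmless for your argument for precisely the reason you isolate: $\frac{q+1}{2}$ is even, so $-1\in S_1$ and the constant $-\frac{2}{c}$ still lies in $S_1$, leaving the containment $g(S_2)\subseteq S_1$ intact.
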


%

%
\section*{Conflict of interest}

The authors declare that they have no conflict of interest.



\end{document}